\theoremstyle{plain}
  \newtheorem{theorem}{Theorem}[section]
  \newtheorem{proposition}[theorem]{Proposition}   
  \newtheorem{lemma}[theorem]{Lemma}
\theoremstyle{definition}
  \newtheorem{example}[theorem]{Example}
  \newtheorem{definition}[theorem]{Definition}
  \newtheorem{remark}[theorem]{Remark}
  \newtheorem{construction}[theorem]{Construction}
\newcommand{\cone}{\operatorname{Cone}}
\newcommand{\SL}[1]{\operatorname{SL}_{{#1}}}
\newcommand{\bb}[1]{\mathbb{#1}}
\newcommand{\cal}[1]{\mathcal{#1}}
\newcommand{\quotient}{ / \! \! /}
\title{Toric structure of the moduli space of points in projective space}
\begin{document}
\author{Marwan Bit}
\address{Harvey Mudd College,
301 Platt Blvd.,
Claremont, CA 91711}
\email{mbit@g.hmc.edu}

\author{Javier Gonz\'alez-Anaya}
\address{Department of Mathematics and Computer Science, 
Santa Clara University, 
Santa Clara, CA 95053}
\email{jgonzalezanaya@scu.edu}

\author{Dagan Karp}
\address{Department of Mathematics, 
Harvey Mudd College,
301 Platt Blvd.,
Claremont, CA 91711}
\email{dkarp@hmc.edu}

\author{Yuanyuan Luo}
\address{Harvey Mudd College,
301 Platt Blvd.,
Claremont, CA 91711}
\email{jluo@g.hmc.edu}

\keywords{moduli spaces, toric geometry, nestohedra}
\subjclass[2020]{14D20, 14M25, 52B11}
\date{\today}

\begin{abstract}
Gallardo and Routis constructed compactifications of the moduli space of $n$ labeled points in $\mathbb{P}^d$ by assigning weights to points, generalizing Hassett's weighted compactifications of $M_{0,n}$ to higher-dimensional projective spaces. Among their compactifications, there is a toric compactification that generalizes the standard Losev--Manin compactification to this higher-dimensional setting. Our main result identifies the fan of this toric compactification as a \emph{symmetric product} of a nested fan, generalizing the classical connection between Losev--Manin spaces and the permutohedron to arbitrary dimension. More generally, we prove that the fans of all Gallardo–Routis compactifications that admit reduction maps from this Losev–Manin space are symmetric products of building sets. This shows that the combinatorics of these compactifications are controlled by coarsenings of the permutohedral fan that give rise to Hassett spaces.
\end{abstract}

\maketitle

\section{Introduction}
Roughly 25 years ago, Losev and Manin introduced a compactification of the moduli space of points on the projective line \cite{losev2000new}. Their space, now denoted $\overline{M}_{0,n}^{LM}$, parametrizes weighted pointed stable rational curves in which two marked points are distinguished and forbidden from colliding, while the remaining $n$ points may coincide. A central feature of their construction is that $\overline{M}_{0,n}^{LM}$ is isomorphic to the toric variety of the permutohedron. This toric structure has proved remarkably useful: Bergstr\"om and Minabe exploited permutohedral symmetry to compute the cohomology of Losev--Manin space \cite{bergstrom_minabe_2014}, and Gholampour et al.\ demonstrated the computational power of its Cremona symmetry in Gromov--Witten theory \cite{cremona2016}.

Losev--Manin spaces fit naturally into the framework of Hassett's weighted pointed stable curves $\overline{M}_{0,\mathcal{A}}$ \cite{hassett2003moduli}, corresponding to heavy/light weight vectors $\mathcal{A} = (1, 1, \epsilon, \ldots, \epsilon)$ \cite{CAVALIERI_HAMPE_MARKWIG_RANGANATHAN_2016}. This perspective has inspired considerable work on the geometry and intersection theory of Hassett spaces; for instance, Kannan et al.\ compute the Chow ring of heavy/light Hassett spaces using tropical methods \cite{kannan2021chow}. Hassett's framework has also served as a template for other moduli problems: Clader et al.\ construct permutohedral moduli spaces for curves with cyclic group actions \cite{clader2023permutohedral, clader2023wonderful}, whose dual boundary complex was further studied by Anderson \cite{anderson2024}, and Chen et al.\ study configurations of points in affine space modulo translation and scaling \cite{Chen-Gibney-Krashen}.

Gallardo and Routis extended Hassett's ideas to higher-dimensional ambient spaces in two related but distinct ways \cite{gallardo2017wonderful}. The first parametrizes points in affine space up to translation and scaling, while the second parametrizes points in projective space up to the action of $\mathrm{SL}_d$. Both constructions admit weight systems analogous to Hassett's, and both recover Hassett spaces when $d = 1$. For each construction, a particular choice of weights yields a toric compactification: in the affine case, these are the higher-dimensional Losev--Manin spaces studied in \cite{gallardo2023higherdimensionallosevmaninspacesgeometry}. In the projective case, one obtains toric varieties $\overline{P}_{d,n}^{LM}$, whose structure has yet to be further studied, and which is the focus of this paper.

Generalized permutohedra are polytopes obtained from usual permutohedra by parallel translations of its faces. Equivalently, they are polytopes whose inner normal fan is a coarsening of the permutohedral fan. They form a rich combinatorial family containing examples such as graph associahedra, nestohedra, and stellahedra. In the context of moduli theory, it is natural to ask what generalized permutohedra give rise to compactifications of $M_{0,n}$ or other moduli spaces.  This question has been recently answered by Bown and Gonz\'alez Anaya \cite{bown2024hypergraphassociahedracompactificationsmoduli}, who classified which coarsenings of the permutohedral fan give rise to Hassett compactifications $\overline{M}_{0,\mathcal{A}}$, and proved an analogous result for the higher-dimensional Losev–-Manin spaces of \cite{gallardo2023higherdimensionallosevmaninspacesgeometry}. In this paper, we solve the analogous classification problem for the moduli space of points in $\mathbb{P}^d$. Concretely, we give a complete combinatorial characterization of those coarsenings of the fan of $\overline{P}_{d,n}^{LM}$ that also give rise to toric compactifications of this moduli problem. To state our results precisely, we require some combinatorial background.

A \emph{building set} $\mathcal{B}$ on $[n] = \{1, \ldots, n\}$ is a collection of subsets closed under taking unions of overlapping sets. Building sets encode nestohedra, a class of polytopes that are notably well-suited to study the combinatorics of moduli spaces of point configurations. Given a building set $\mathcal{B}$, there is an associated nested fan $\Sigma(\mathcal{B})$ and corresponding toric variety $X(\mathcal{B})$. We introduce a product operation: for a building set $\mathcal{B}$ on the $(n-1)$-simplex, the \emph{$d$\textsuperscript{th} symmetric product} $\operatorname{sym}^d(\mathcal{B})$ is a building set on the $d$-fold product of simplices, constructed by taking products $I \times \cdots \times I$ for each $I \in \mathcal{B}$. See Section~\ref{sec: symmetric products} for precise definitions.

Our first main result identifies the combinatorial structure of the Losev--Manin spaces $\overline{P}_{d,n}^{LM}$.

\begin{theorem}[{Theorem~\ref{main thm1}}]\label{intro: main thm}
    The Losev--Manin compactification $\overline{P}_{d,n}^{LM}$ is a toric variety whose fan is the $d$\textsuperscript{th} symmetric product of the complete building set on $\{d+2,\ldots,n\}$.
\end{theorem}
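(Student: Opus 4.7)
My plan is to present $\overline{P}_{d,n}^{LM}$ explicitly as a toric compactification and then identify its fan with $\operatorname{sym}^d(\cB_{\mathrm{complete}})$ by matching rays and cones.

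First I would rigidify the configuration using the $\mathrm{PGL}_{d+1}$-action to place the first $d+1$ heavy points at the coordinate vertices of $\mathbb{P}^d$. The residual stabilizer is the maximal diagonal torus $T_{\mathrm{res}} \cong \Gm^d$ of $\mathrm{PGL}_{d+1}$, and each of the remaining $n-d-1$ light points then varies freely in the open torus $\Gm^d = \mathbb{P}^d \setminus \{x_0 \cdots x_d = 0\}$. Hence the open stratum of $\overline{P}_{d,n}^{LM}$ is the geometric quotient $(\Gm^d)^{n-d-1}/T_{\mathrm{res}}$, which is an algebraic torus $T$ of dimension $d(n-d-2)$ whose cocharacter lattice decomposes as $N \cong N_0^{\oplus d}$, with $N_0 = \ZZ^{n-d-1}/\ZZ\cdot\mathbf{1}$ the cocharacter lattice of the standard permutohedral variety on the index set $\{d+2, \ldots, n\}$. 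Since the LM weight vector is $T_{\mathrm{res}}$-symmetric, the Gallardo--Routis construction of $\overline{P}_{d,n}^{LM}$ as an iterated blowup of a GIT quotient along boundary strata is $T$-equivariant, so the $T$-action extends to all of $\overline{P}_{d,n}^{LM}$ and makes it a toric variety.

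The core of the proof is to identify the rays and cones of the toric fan. For each nonempty proper subset $I \subsetneq \{d+2, \ldots, n\}$ and each coordinate direction $j \in \{1, \ldots, d\}$, the moduli-theoretic description yields a $T$-invariant boundary divisor $D_{I, j}$ parameterizing degenerations in which the light points indexed by $I$ ``bubble off'' together in the $j$-th coordinate direction. I would compute the primitive generator of the associated ray to be the natural image of $\sum_{i \in I} e_i \in N_0$ placed in the $j$-th summand of $N_0^{\oplus d}$. A collection of such divisors intersects nontrivially on $\overline{P}_{d,n}^{LM}$ iff the corresponding degenerations are simultaneously realizable; translating this into combinatorics gives exactly the nested-collection condition that defines the cones of the nested fan of $\operatorname{sym}^d(\cB_{\mathrm{complete}})$ from Section~\ref{sec: symmetric products}.

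The main obstacle is the divisor-level analysis: carefully classifying the boundary of $\overline{P}_{d,n}^{LM}$ in this higher-dimensional setting and confirming that the LM weight choice produces exactly these divisors $D_{I, j}$ (and no others). In dimension one there is a single direction in which subsets can collide, whereas in higher dimension each subset of light points may degenerate along any of the $d$ independent coordinate axes, and it is precisely this splitting by direction that produces the symmetric product structure. An alternative, possibly cleaner route would be to realize $\overline{P}_{d,n}^{LM}$ directly as an iterated toric blowup and match the stellar subdivisions performed at each stage to the building set elements of $\operatorname{sym}^d(\cB_{\mathrm{complete}})$, bypassing the direct divisor bookkeeping entirely.
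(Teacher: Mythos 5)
Your setup of the torus (rigidifying the heavy points and identifying the dense orbit with $(\Gm^{n-d-2})^d$) is fine, but the core step of your argument --- the classification of the boundary divisors and their ray generators --- is incorrect, and it is exactly where the symmetric-product structure lives. You posit one divisor $D_{I,j}$ for \emph{each} pair (nonempty proper $I\subsetneq\{d+2,\dots,n\}$, direction $j$), with ray $\sum_{i\in I}e_{i,j}$ sitting in the $j$-th summand of $N_0^{\oplus d}$. For $|I|\ge 2$ these vectors are \emph{not} rays of $\Sigma(\operatorname{sym}^d(\mathcal{B}))$: the building set $\operatorname{sym}^d(\mathcal{B})$ contains only the ``diagonal'' sets $I^{\sqcup d}$, so the stellar subdivisions are performed at the cones $\tau_I=(\sigma_I,\dots,\sigma_I)$ and the new rays are the diagonal barycenters $\sum_{j=1}^{d}\sum_{i\in I}e_{i,j}$ --- one ray per subset $I$, not $d$ of them. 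This matches the geometry: the blow-up center for $I$ is $H_{I\cup\{d+1\}}=\bigcap_{i\in I}V(b_i^0,\dots,b_i^{d-1})$, the locus where the points $p_i$, $i\in I$, collide with $p_{d+1}$. A collision in $\mathbb{P}^d$ forces all $d$ coordinates to vanish simultaneously, so it is a single codimension-$d|I|$ center producing a single irreducible exceptional divisor; collisions do not split ``direction by direction.'' The only direction-indexed boundary divisors are the strict transforms of the coordinate hyperplanes $V(b_i^k)$ (rays $e_{i,k}$, i.e.\ your $D_{\{i\},k}$), and those record condition (C2)-type degenerations, not collisions. With your proposed ray list the divisor count and the intersection combinatorics both come out wrong, so the matching with the nested fan in your third paragraph cannot be carried through.

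The ``alternative, possibly cleaner route'' you mention in passing at the end is in fact the paper's proof: $\overline{P}_{d,n}^{LM}$ is by construction the iterated blow-up of $(\mathbb{P}^{n-d-2})^d$ along the strict transforms of the $H_{I\cup\{d+1\}}$, each of which is torus-invariant and corresponds, via the orbit--cone correspondence (using that $V(\sigma)\cap V(\tau)=V(\sigma\vee\tau)$), to the diagonal cone $\tau_I$; each blow-up is then exactly the barycentric subdivision prescribed by the element $I^{\sqcup d}$ of $\operatorname{sym}^d(\mathcal{B})$. If you pursue that route instead, the divisor bookkeeping you were worried about disappears, and the correct diagonal rays fall out automatically.
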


For $d = 1$, this recovers Losev and Manin's original result that $\overline{M}_{0,n}^{LM}$ is the toric variety of the permutohedron. More generally, the GR construction allows points to be assigned weights $\mathcal{A} = (a_1, \ldots, a_n)$, yielding a family of compactifications $\overline{P}_{d,n}^\mathcal{A}$. Our second main result describes precisely the fans of all toric compactifications that admit a reduction map from $\overline{P}_{d,n}^{LM}$.

\begin{theorem}[{Theorem~\ref{main thm2}}]\label{intro: main thm2}
Let $\mathcal{A}$ be a weight vector such that $\overline{P}_{d,n}^\mathcal{A}$ admits a reduction map from $\overline{P}_{d,n}^{LM}$. Then, there exists a building set $\mathcal{B}_\mathcal{A}$, depending on $\mathcal{A}$, such that the fan of $\overline{P}_{d,n}^\mathcal{A}$ is the $d$\textsuperscript{th} symmetric product of $\mathcal{B}_\mathcal{A}$.
\end{theorem}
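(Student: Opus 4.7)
By Theorem~\ref{intro: main thm}, the fan of $\overline{P}_{d,n}^{LM}$ is $\Sigma_0 := \operatorname{sym}^d(\mathcal{B}_0)$, where $\mathcal{B}_0$ denotes the complete building set on $\{d+2,\ldots,n\}$. A reduction map $\rho : \overline{P}_{d,n}^{LM} \to \overline{P}_{d,n}^\mathcal{A}$ is birational and equivariant for the common dense torus, hence toric, so $\overline{P}_{d,n}^\mathcal{A}$ is toric with fan $\Sigma_\mathcal{A}$ equal to a coarsening of $\Sigma_0$ on the same lattice. The task is therefore to show that every such coarsening arising from a GR weight vector is itself a symmetric product of a building set.

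By the symmetric product construction, each ray of $\Sigma_0$ is indexed by a pair $(I,k)$ with $I \in \mathcal{B}_0$ and $k \in \{1,\ldots,d\}$ the factor index; write $D_{I,k}$ for the associated torus-invariant divisor on $\overline{P}_{d,n}^{LM}$. The first step is to characterize exactly which $D_{I,k}$ get contracted by $\rho$. Paralleling Hassett's criterion for reduction maps of weighted curves, this should reduce to a numerical condition on the total $\mathcal{A}$-weight of the points in $I$ (together with data from the $k$th heavy reference point) relative to a threshold determined by $\mathcal{A}$. The essential feature to extract is that the $d+1$ heavy reference points of the LM configuration play symmetric roles, so that, combined with the symmetry built into the product, the contraction condition depends on $I$ only through its weight sum and treats the factor index $k$ uniformly.

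Define
\[
\mathcal{B}_\mathcal{A} := \{\,I \in \mathcal{B}_0 : D_{I,1} \text{ is not contracted by } \rho\,\},
\]
which by the above symmetry equals $\{I : D_{I,k} \text{ survives}\}$ for every $k$. To check that $\mathcal{B}_\mathcal{A}$ is a building set, I would restrict $\rho$ to the closure of a single factor: this restriction realizes a Hassett-type reduction map between associated $\overline{M}_{0,n}$-spaces, so the Bown--Gonz\'alez Anaya classification \cite{bown2024hypergraphassociahedracompactificationsmoduli} guarantees that the subsets of $\mathcal{B}_0$ whose corresponding divisors survive are closed under overlapping unions. Hence $\mathcal{B}_\mathcal{A}$ is indeed a building set.

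It remains to identify $\Sigma_\mathcal{A}$ with $\operatorname{sym}^d(\mathcal{B}_\mathcal{A})$ at the level of cones; the rays agree by construction. The refinement $\operatorname{sym}^d(\mathcal{B}_\mathcal{A}) \preceq \Sigma_\mathcal{A}$ is automatic, so the content lies in the reverse direction: every maximal cone of $\operatorname{sym}^d(\mathcal{B}_\mathcal{A})$ must sit inside a single maximal cone of $\Sigma_\mathcal{A}$. I expect this to be the main obstacle, as it amounts to proving that the coarsening is controlled entirely by independent one-dimensional coarsenings, with no additional mergers forced by interactions between factors. I would attempt this by matching torus-fixed strata, using the GIT description of $\overline{P}_{d,n}^\mathcal{A}$ to show that two maximal nested sets in $\operatorname{sym}^d(\mathcal{B}_\mathcal{A})$ determine distinct torus-fixed points whenever they differ as unordered $d$-tuples of maximal nested sets in $\mathcal{B}_\mathcal{A}$. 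Granted this separation of strata, the two fans share the same rays and the same maximal cones, hence coincide, and the theorem follows with the building set $\mathcal{B}_\mathcal{A}$ constructed above.
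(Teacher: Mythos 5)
Your overall strategy --- realize $\overline{P}_{d,n}^\mathcal{A}$ as a coarsening of the Losev--Manin fan, characterize the contracted divisors, and then fight to show the coarsening is ``factor-by-factor'' --- is not the paper's route, and as written it has genuine gaps. The decisive input you are missing is the Gallardo--Routis blow-up description (quoted in the paper from \cite[Corollary~4.9]{gallardo2017wonderful}): for \emph{any} weight vector, $\overline{P}_{d,n}^\mathcal{A}$ is the iterated blow-up of $(\mathbb{P}^{n-d-2})^d$ along the strict transforms of the explicit multilinear subspaces $H_I$ with $\sum_{i\in I}a_i>1$. The hypothesis that a reduction map from $\overline{P}_{d,n}^{LM}$ exists is equivalent to $a_{d+2}+\cdots+a_n\le 1$, which forces every such center to contain the index $d+1$, i.e.\ to be a torus-invariant coordinate subspace $H_{I\cup\{d+1\}}=\bigcap_{i\in I}V(b_i^0,\dots,b_i^{d-1})$ corresponding to the diagonal cone $\tau_I=(\sigma_I,\dots,\sigma_I)$. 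Blow-ups of these invariant centers are star subdivisions of the $\tau_I$, which is \emph{by definition} the construction of $\operatorname{sym}^d(\mathcal{B}_\mathcal{A})$ for the explicit building set $\mathcal{B}_\mathcal{A}=\{I\subsetneq\{d+2,\dots,n\}: a_{d+1}+\sum_{i\in I}a_i>1\}$ together with singletons. The proof is then the same computation as Theorem~\ref{main thm1}; no contraction criterion, no coarsening analysis, and no stratum-separation argument are needed.

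Concretely, the steps in your write-up that do not go through: (a) the criterion for which divisors $\rho$ contracts is asserted (``should reduce to a numerical condition'') but never established, so your $\mathcal{B}_\mathcal{A}$ is not actually pinned down in terms of $\mathcal{A}$; (b) the indexing of the rays of $\Sigma_0$ by pairs $(I,k)$ is incorrect --- each non-singleton $I$ in the building set contributes a \emph{single} exceptional ray, the barycenter of the diagonal cone $(\sigma_I,\dots,\sigma_I)$, not one ray per factor, so the symmetry-in-$k$ discussion is vacuous for these rays; (c) the claim that $\operatorname{sym}^d(\mathcal{B}_\mathcal{A})$ refining $\Sigma_\mathcal{A}$ is ``automatic'' is unjustified; and (d) the step you yourself identify as the main obstacle (ruling out cone mergers forced by interactions between factors) is left as a sketch. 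Each of these dissolves once you take the blow-up description as the starting point rather than the reduction map.
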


The previous theorem implies that the combinatorial data defining the compactification $\overline{P}_{d,n}^\mathcal{A}$ is fully controlled by the combinatorics of the building set $\mathcal{B}_\mathcal{A}$ together with the symmetric product construction. In fact, the toric variety associated to this building set is a toric Hassett compactification of $M_{0,n}$.

\begin{theorem}[{Theorem~\ref{main thm3}}]\label{intro: main thm3}
    Let $\mathcal{A}=(a_1,\dots,a_n)$ be a weight vector such that $\overline{P}_{d,n}^\mathcal{A}$ admits a reduction map from $\overline{P}_{d,n}^{LM}$, and let $\mathcal{B}_\mathcal{A}$ be the building set from Theorem~\ref{intro: main thm2}. Then $\mathcal{A}' = (1,a_{d+1},\dots,a_n)$ defines a Hassett compactification $\overline{M}_{0,\mathcal{A}'}$ satisfying:
    \begin{enumerate}
        \item $\overline{M}_{0,\mathcal{A}'}$ admits a reduction map to the compactification of $M_{0,n-d+1}$ given by $\mathbb{P}^{n-d-2}$, and a reduction map from $\overline{M}_{0,n-d+1}^{LM}$.
        \item $\overline{M}_{0,\mathcal{A}'}$ is the toric variety constructed from the nested fan of $\mathcal{B}_\mathcal{A}$.
    \end{enumerate}
\end{theorem}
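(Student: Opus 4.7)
The plan is to deduce both parts from two applications of Theorem~\ref{main thm2}, once to $\mathcal{A}$ in dimension $d$ and once to $\mathcal{A}'$ in dimension $1$, and then identify the resulting building sets.

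For part (1), I would first check that $\mathcal{A}' = (1, a_{d+1}, \ldots, a_n)$ is a valid Hassett weight vector for $n - d + 1$ marked points. The componentwise bounds $a_i \in (0,1]$ are inherited from $\mathcal{A}$, and the stability inequality $1 + \sum_{i=d+1}^n a_i > 2$ reduces to $\sum_{i=d+1}^n a_i > 1$, which follows from the GR weight condition $\sum_i a_i > d+1$ combined with the bound $a_1 + \cdots + a_d \leq d$. I would then establish the reduction map $\overline{M}_{0,n-d+1}^{LM} \to \overline{M}_{0,\mathcal{A}'}$ by invoking Hassett's chamber criterion: the hypothesis that $\overline{P}_{d,n}^{LM}$ dominates $\overline{P}_{d,n}^\mathcal{A}$ constrains the light weights $(a_{d+2}, \ldots, a_n)$ exactly so that some Losev--Manin representative on $n-d+1$ points dominates $\mathcal{A}'$. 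Finally, the reduction $\overline{M}_{0,\mathcal{A}'} \to \mathbb{P}^{n-d-2}$ is the Kapranov--style contraction available for any Hassett space on $n-d+1$ points possessing a distinguished heavy weight equal to $1$.

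For part (2), I would observe that $\overline{M}_{0,\mathcal{A}'} \cong \overline{P}_{1,n-d+1}^{\mathcal{A}'}$ falls under Theorem~\ref{main thm2} with dimension parameter $d' = 1$, since by (1) it admits a reduction from $\overline{P}_{1,n-d+1}^{LM} = \overline{M}_{0,n-d+1}^{LM}$. Theorem~\ref{main thm2} then produces a building set $\mathcal{B}_{\mathcal{A}'}$ on the ground set $\{3, \ldots, n-d+1\}$ such that the fan of $\overline{M}_{0,\mathcal{A}'}$ is $\operatorname{sym}^1(\mathcal{B}_{\mathcal{A}'})$. Because the first symmetric product is the identity operation, this fan is the nested fan $\Sigma(\mathcal{B}_{\mathcal{A}'})$, realizing $\overline{M}_{0,\mathcal{A}'}$ as the associated toric variety.

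The central task, and the main obstacle, is the identification $\mathcal{B}_{\mathcal{A}'} = \mathcal{B}_\mathcal{A}$ after the natural relabeling of ground sets $\{3, \ldots, n-d+1\} \leftrightarrow \{d+2, \ldots, n\}$. I expect the explicit combinatorial description of $\mathcal{B}_\mathcal{A}$ extracted from the proof of Theorem~\ref{main thm2} to depend only on the light weights $(a_{d+2}, \ldots, a_n)$ and on a single numerical threshold inherited from the heavy block as a whole. Since $\mathcal{A}'$ collapses the heavy block $(a_1, \ldots, a_d)$ into a single weight $1$ without altering the light data or the effective threshold, the two building sets should coincide. Once this agreement is verified by unraveling the definitions, the nested fan of $\mathcal{B}_\mathcal{A}$ is indeed the fan of $\overline{M}_{0,\mathcal{A}'}$, completing the proof.
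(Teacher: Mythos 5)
Your part (1) follows the paper's route (Hassett's domination criterion applied to explicit representative weight vectors), but your verification that $\mathcal{A}'$ is a valid Hassett vector has a strictness gap. There is no ``GR weight condition $\sum_i a_i > d+1$'' in the definition: a weight vector only satisfies $a_i \geq w_i$ componentwise, which gives $\sum_i a_i \geq \sum_i w_i = d+1$ non-strictly, and $a_1+\cdots+a_d \leq d$ is also non-strict, so your argument only yields $\sum_{i=d+1}^n a_i \geq 1$, not $>1$. The paper instead bounds componentwise on the tail: $\sum_{i=d+1}^n a_i \geq \sum_{i=d+1}^n w_i = 1 + d\varepsilon' > 1$, where the slack $d\varepsilon'$ comes from $w_{d+1}$. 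Similarly, ``Kapranov-style contraction available for any Hassett space with a heavy weight'' is not a criterion you can invoke as stated; you need to exhibit a weight vector below $\mathcal{A}'$ whose Hassett space is $\mathbb{P}^{n-d-2}$, and $(1,w_{d+1},\dots,w_n)$ does the job after a short computation. These are fixable, but as written the key inequalities are not established.

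For part (2) you take a genuinely different route: the paper deduces the statement directly from the classification of toric Hassett compactifications in Bown--Gonz\'alez Anaya \cite[Theorem~1.1]{bown2024hypergraphassociahedracompactificationsmoduli}, whereas you bootstrap from Theorem~\ref{main thm2} at $d'=1$. The identification you flag as the ``central task'' is in fact immediate: by the formula in Theorem~\ref{main thm2}, both $\mathcal{B}_{\mathcal{A}}$ and $\mathcal{B}_{\mathcal{A}'}$ consist of the sets $I$ with $a_{d+1}+\sum_{i\in I}a_i>1$, and $\mathcal{A}'$ retains $a_{d+1}$, so they coincide on the nose after relabeling (note the threshold involves $a_{d+1}$, not an aggregate of $a_1,\dots,a_d$). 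The real gap in your route is the unproved isomorphism $\overline{M}_{0,\mathcal{A}'}\cong \overline{P}_{1,n-d+1}^{\mathcal{A}'}$: the paper only establishes this for the Losev--Manin weights, and Theorem~\ref{main thm2} at $d'=1$ describes the fan of $\overline{P}_{1,n-d+1}^{\mathcal{A}'}$, not of the Hassett space. You would need to cite Gallardo--Routis (or the Bown--Gonz\'alez Anaya classification) for the general-weight $d=1$ identification, and also verify that $\mathcal{A}'$ lies in $\mathcal{D}_{1,n-d+1}^P$ so that Theorem~\ref{main thm2} applies; once you import such a citation you are essentially back at the paper's one-line appeal to the literature, so the apparent economy of your approach is somewhat illusory.
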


In this way, the symmetric product provides a bridge relating the modular coarsenings of the fan of $\overline{P}_{d,n}^{LM}$ to those of $\overline{M}_{0,n-d+1}^{LM}$ that refine the fan of $\mathbb{P}^{n-d-2}$.

\subsection*{Acknowledgements.} JGA is grateful to Patricio Gallardo for many helpful conversations.

\section{The Gallardo--Routis compactification of the moduli space of labeled points in projective space}

In \cite{gallardo2017wonderful}, Gallardo and Routis studied the problem of compactifying the moduli space of configurations of labeled points in general position in $\mathbb{P}^d$ up to the action of $\SL{d+1}$. Their theory considers different compactifications by assigning weights to the points. Just as Hassett's compactifications for $M_{g,n}$, different weights may give rise to different compactifications, some of which are toric. We begin this section by outlining the construction of these moduli spaces.

Let $n$ and $d$ be positive integers such that $n>d+2$. Define the weight vector $\cal{W}=(w_1,\dots,w_n)\in\mathbb{Q}^n$ given by
\begin{align*}
w_i=\left\{
\begin{aligned}
    &1-\varepsilon',&\text{ if } 1\leq i\leq d;\\  
    &1 - (n-(d+1))\varepsilon + d\varepsilon',&\text{ if } i=d+1;\\
    &\varepsilon,&\text{ if } d+1<i;
\end{aligned}
\right.
\end{align*}
where $\varepsilon=1/(n-d)$ and $\varepsilon'=1/((d+1)(n-d))$.

Consider the $\mathbb{Q}$-Cartier divisor $\cal{O}(w_1,\dots,w_n)$ in $(\mathbb{P}^d)^n$, and let $L$ be any multiple of it that is Cartier. The line bundle $L$ is ample, and it admits a canonical $\SL{d+1}$-linearization by \cite[Chapter~3]{dolgachev2003lectures}. Gallardo and Routis in \cite[Lemma~4.2]{gallardo2017wonderful} prove that the semistable and stable loci in $(\mathbb{P}^d)^n$ agree under this linearization. Moreover, they characterize this open set as the set of points $(p_1,\dots,p_n)\in(\mathbb{P}^d)^n$ such that:
\begin{enumerate}
    \item[(C1)] The points $p_1,\dots,p_d,p_i$ are in general linear position for all $i=d+1,\dots,n$. That is, not one of these points lies in the span of the other $d$ points.
    \item[(C2)] The points $p_{d+2},\dots,p_{n}$ cannot all lie on the hyperplane spanned by $p_1,\dots,p_{k-1},p_{k+1},\dots,p_{d+1}$ for any $k=2,\dots,d$. 
\end{enumerate}
Let $\cal{U}\subseteq(\mathbb{P}^d)^n$ denote the open subset defined by these conditions. Condition (C1) implies that the $\SL{d+1}$-equivalence class of a point $(p_1,\dots,p_n)\in\cal{U}$ has a representative of the form
\[
    p_1=[1:0:\cdots:0],\dots,p_{d+1}=[0:\cdots:0:1],\quad\text{and}\quad p_i=[b_i^0:\cdots:b_i^{d-1}:1],
\]
for all $i=d+2,\dots,n$. By condition (C2), in the previous representation the $(n-d-2)$-tuple $(b_{d+2}^k,\dots,b_n^k)\in\mathbb{C}^{n-d-2}$ is not identically zero for any $k=0,\dots,d-1$.

From these observations it follows that there is a well-defined morphism
\[
    \cal{U}/\SL{d+1}\to(\mathbb{P}^{n-d-2})^d
\]
mapping the $\SL{d+1}$-equivalence class of $(p_1,\dots,p_n)\in\cal{U}$ in $(\mathbb{P}^d)^n\quotient_L\SL{d+1}$ to the point
\[
    \prod_{k=0}^{d-1}[b^k_{d+2}:\cdots:b_n^k]\in(\mathbb{P}^{n-d-2})^d.
\]
Moreover, by \cite[Lemma~4.2]{gallardo2017wonderful}, this map extends to an isomorphism
\[
    (\mathbb{P}^d)^n\quotient_L\SL{d+1}\cong(\mathbb{P}^{n-d-2})^d.
\]

\begin{example}
    Let us consider the case of $n=5$ and $d=2$. Then, the isomorphism
    \[
    (\mathbb{P}^2)^5\quotient_L\SL{3}\cong(\mathbb{P}^{1})^2
    \]
    maps a point $([b_4^0:b_5^0],[b_4^1:b_5^1])\in(\mathbb{P}^{1})^2$ to the equivalence class of the coordinate points $p_1,p_2,p_3$, and
    \[
        p_4=[b_4^0:b_4^1:1],\, p_5=[b_5^0:b_5^1:1].
    \]
\end{example}
The isomorphism $(\mathbb{P}^d)^n\quotient_L\SL{d+1}\cong(\mathbb{P}^{n-d-2})^d$ realizes $(\mathbb{P}^{n-d-2})^d$ as a parameter space of $n$ labeled points in $\mathbb{P}^d$ satisfying the conditions (C1) and (C2), up to the action of $\SL{d+1}$. In the previous construction the weight vector $\cal{W}$ determines which point configurations of points are stable. Next we will explain how to construct other compactifications of this moduli problem by using more general weight vectors. 

The variety $(\mathbb{P}^d)^n\quotient_L\SL{d+1}\cong(\mathbb{P}^{n-d-2})^d$ is denoted by $\overline{P}_{d,n}^\mathcal{W}$, and its points parametrize equivalence classes of point configurations satisfying conditions (C1) and (C2). Having established this first compactification, we now consider more general weight vectors that allow certain points to coincide, leading to different moduli spaces. 

\begin{definition}
    We refer to a vector $\cal{A}=(a_1,\dots,a_n)\in\mathbb{Q}^n$ such that $w_i\leq a_i\leq 1$ for all $i=1,\dots, n$ as a \emph{weight vector}. The set of all weight vectors is denoted $\cal{D}_{d,n}^P$.
\end{definition}

Given a weight vector $\cal{A}=(a_1,\dots,a_n)$, a configuration of points $(p_1,\dots,p_n)$ is said to be $\cal{A}$-stable if $p_{i_1}=\cdots=p_{i_k}$ implies that $a_{i_1}+\cdots+a_{i_k}\leq 1$. For any $\cal{A}\in\cal{D}_{d,n}^P$, Gallardo and Routis construct a smooth moduli space of $\cal{A}$-stable point configurations via GIT, denoted $\overline{P}_{d,n}^\cal{A}$. We do not need the precise construction of this object. Instead, we recall an alternative construction central to our main result.

\begin{proposition}[{\cite[Corollary~4.9]{gallardo2017wonderful}}]
    Let $\cal{A}\in\cal{D}_{d,n}^P$. Then, $\overline{P}_{d,n}^\cal{A}$ is isomorphic to a sequence of blow-ups of $(\mathbb{P}^{n-d-2})^d$ along the iterated strict transforms of a collection $\cal{G}_\cal{A}$ of multilinear subspaces; see Definition~\ref{def: ga}. These iterated blow-ups can be performed in any order of ascending dimension of the elements of $\cal{G}_\cal{A}$.
\end{proposition}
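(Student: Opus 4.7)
The plan is to start from the base case $\mathcal{A}=\mathcal{W}$, already handled by the isomorphism $(\mathbb{P}^d)^n\quotient_L\SL{d+1}\cong(\mathbb{P}^{n-d-2})^d$ recalled above, and interpret the passage to an arbitrary $\mathcal{A}\in\mathcal{D}_{d,n}^P$ as a sequence of VGIT wall-crossings between the linearizations defined by $\mathcal{W}$ and $\mathcal{A}$. Using Kirwan's partial desingularization procedure (or the Thaddeus / Dolgachev--Hu framework), each wall contributes a birational modification of the GIT quotient along the image of the locus of strictly semistable configurations that are destabilized as the weight increases. Since a configuration becomes $\mathcal{A}$-unstable precisely when a subset $S\subseteq[n]$ with $\sum_{i\in S}a_i>1$ has all of its points coinciding, the walls to cross are indexed exactly by the $\mathcal{A}$-heavy subsets that are still $\mathcal{W}$-light.

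Next, I would make $\mathcal{G}_\mathcal{A}$ explicit using the normal form $p_i=[b_i^0:\cdots:b_i^{d-1}:1]$ for $i\geq d+2$. Under the isomorphism $(\mathbb{P}^d)^n\quotient_L\SL{d+1}\cong(\mathbb{P}^{n-d-2})^d$, the locus where a subset $S\subseteq\{d+2,\dots,n\}$ collides becomes the product of diagonals
\[
    \prod_{k=0}^{d-1}\{b_{i}^k=b_{j}^k\ \text{for all}\ i,j\in S\},
\]
a single diagonal in each of the $d$ factors $\mathbb{P}^{n-d-2}$; coincidences involving one of the coordinate points $p_1,\dots,p_{d+1}$ translate analogously into products of coordinate linear subspaces, one in each factor. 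Each such locus is therefore multilinear, and I take $\mathcal{G}_\mathcal{A}$ to be the collection of all such loci indexed by $\mathcal{A}$-heavy, $\mathcal{W}$-light subsets. A dimension count, together with the VGIT description above, then identifies $\overline{P}_{d,n}^\mathcal{A}$ with the iterated blow-up of $(\mathbb{P}^{n-d-2})^d$ along strict transforms of the elements of $\mathcal{G}_\mathcal{A}$.

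Finally, I would prove that the blow-ups can be performed in any order of ascending dimension. The key input is to show that $\mathcal{G}_\mathcal{A}$ is a \emph{building set of subvarieties} in the sense of Li and MacPherson--Procesi: for any $V_1,V_2\in\mathcal{G}_\mathcal{A}$, the set-theoretic intersection $V_1\cap V_2$ is either transverse with components again in $\mathcal{G}_\mathcal{A}$ or already an element of $\mathcal{G}_\mathcal{A}$. Once this is checked, the standard blow-up commutativity theorem for building sets allows me to reorder the blow-ups freely within each dimension stratum. The main obstacle, and the only genuinely computational step, is verifying this building-set compatibility for the specific product-of-diagonals subspaces constructed above: I must show that their pairwise intersections decompose into multilinear pieces indexed precisely by the combinatorics of coincidence subsets. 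This ultimately reflects the fact that $\mathcal{A}$-stability is defined by a union of subset-indexed conditions, and it is the point where the combinatorial structure that fuels the symmetric-product description in Theorems~\ref{intro: main thm} and~\ref{intro: main thm2} first appears.
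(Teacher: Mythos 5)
First, note that the paper does not prove this proposition at all: it is imported verbatim as Corollary~4.9 of Gallardo--Routis, so your attempt can only be measured against their argument rather than anything in this paper. Your overall architecture --- identify the collision loci as multilinear subspaces of $(\mathbb{P}^{n-d-2})^d$, show that $\mathcal{G}_\mathcal{A}$ is a building set of subvarieties in the sense of Li~Li / MacPherson--Procesi, and invoke the wonderful-compactification reordering theorem to get the ``any order of ascending dimension'' clause --- does mirror the structure of their proof, and your translation of the coincidence loci into products of diagonals and coordinate subspaces matches the paper's $H_{i,j}$ and $H_{i,d+1}$. (One small correction: for any admissible weight vector only $p_{d+1}$ among the coordinate points can participate in a collision, since $w_i+w_j>1$ whenever $i\le d$; this is why $\mathcal{G}_\mathcal{A}$ is indexed by subsets of $\{d+1,\dots,n\}$ only.)

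The genuine gap is your first step. You propose to reach $\overline{P}_{d,n}^{\mathcal{A}}$ from $(\mathbb{P}^{n-d-2})^d \cong (\mathbb{P}^d)^n \quotient_L \SL{d+1}$ by a chain of VGIT wall-crossings, each contributing a blow-up along the image of a destabilized locus. But VGIT wall-crossings are in general flips rather than blow-ups, and the smooth space $\overline{P}_{d,n}^{\mathcal{A}}$ is not the GIT quotient of $(\mathbb{P}^d)^n$ at the linearization determined by $\mathcal{A}$: those quotients acquire strictly semistable points, and hence typically singularities, exactly along the collision loci you want to blow up. Kirwan's partial desingularization resolves a single quotient; it does not by itself identify the result with an iterated blow-up of a \emph{different} quotient along the specific centers $H_I$ in the prescribed order. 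That identification is the actual content of Gallardo--Routis's Corollary~4.9, and it rests on exhibiting $\overline{P}_{d,n}^{\mathcal{A}}$ as the wonderful compactification of the arrangement $\mathcal{G}_\mathcal{A}$, which in turn requires exactly the computation you defer to the end: that the $H_I$ form a building set of subvarieties (pairwise intersections are clean, with components again in $\mathcal{G}_\mathcal{A}$). As written, both the existence of the blow-up description and the freedom to reorder hinge on that unperformed check, and the VGIT scaffolding around it would need to be either replaced by taking the wonderful compactification as the starting point or substantially justified.
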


\begin{remark}
     The boundary strata of $\overline{P}_{d,n}^\cal{A}$ are the union of smooth irreducible divisors, and any set of these boundary divisors intersects transversally.
\end{remark}

To construct the set $\cal{G}_\cal{A}$ we identify which subvarieties of $(\mathbb{P}^{n-d-2})^d$ correspond to configurations where specific points coincide. The locus in $(\mathbb{P}^{n-d-2})^d$ corresponding to the $\SL{d+1}$-equivalence classes of point configurations where a point $p_i$ with $i=d+2,\dots,n$ coincides with $p_{d+1}$ is
\[
    H_{i,d+1} = V(b_i^0,\dots,b_i^{d-1}),
\]
while the locus where two distinct points $p_i$ and $p_j$ with $i,j=d+2,\dots,n$ coincide is  
\[
    H_{i,j} = V(b_i^0-b_j^0,\dots,b_i^{d-1}-b_j^{d-1}).
\]
In general, given a subset $I\subsetneq\{d+1,\dots,n\}$ such that $|I|\geq 2$, the locus where the points $p_i$ with $i\in I$ coincide is the subvariety
\begin{align*}
    H_I = \left\{
    \begin{aligned}
        &\bigcap_{i\in I\setminus d+1} H_{i,d+1}, &\text{if }d+1\in I;\\
        &\;\;\,\bigcap_{i,j\in I} H_{i,j}, &\text{if }d+1\notin I.
    \end{aligned}
    \right.
\end{align*}

\begin{definition}\label{def: ga}
    Consider a weight vector $\cal{A}\in\cal{D}_{d,n}^P$. Define the set
    \[
        \cal{G}_\cal{A} = \left\{
            H_I\subseteq (\mathbb{P}^{n-d-2})^d\,\vert\, I\subsetneq\{d+1,\dots,n\}\text{ and } \sum_{i\in I}a_i>1
        \right\}.
    \]
\end{definition}

\begin{example}\label{ex: moduli}
    Consider $n=5$ and $d=2$. 
    \begin{enumerate}
        \item Let $\cal{A}=(1,\dots,1)\in\cal{D}_{2,5}^P$. Then, $\overline{P}_{2,5}=\overline{P}_{2,5}^\cal{A}$ is the iterated blow-up of $(\mathbb{P}^1)^2$ along the subvarieties
        \[
            V(x_0,y_0),V(x_1,y_1),V(x_0-y_0,x_1-y_1),
        \]
        where $[x_0:x_1]$ and $[y_0:y_1]$ are the coordinates of each $\mathbb{P}^1$.

        \item Let $\cal{B}=(1,1,1,1/2,1/2)\in\cal{D}_{2,5}^P$. Then, $\overline{P}_{2,5}^\cal{B}$ is the iterated blow-up of $(\mathbb{P}^1)^2$ along the subvarieties
        \[
            V(x_0,y_0),V(x_1,y_1).
        \]
    \end{enumerate}
\end{example}

Among all possible weight vectors, there is a particular choice of weight data giving rise to a higher-dimensional analog of the Losev–Manin compactification of $M_{0,n}$.

\begin{definition}
    Consider the weight vector $\cal{A}_{LM}=(a_1,\dots,a_n)$ such that
    \begin{align*}
        a_i = \left\{
        \begin{aligned}
            &1,&\text{ if }i=1,\dots,d+1;\\
            &1/(n-d-1),&\text{if } i=d+2,\dots,n.
        \end{aligned}
        \right.
    \end{align*}
    We refer to the compactification $\overline{P}_{d,n}^{LM}:=P_{d,n}^{\cal{A}_{LM}}$ as the \emph{Losev--Manin compactification} for this moduli problem.
\end{definition}    

\begin{lemma}
    The Losev--Manin compactification $\overline{P}_{d,n}^{LM}$ is a toric variety.
\end{lemma}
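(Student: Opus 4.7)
The strategy is to use the iterated blow-up description of $\overline{P}_{d,n}^{\mathcal{A}}$ provided by the proposition above, and show that for the Losev--Manin weight $\mathcal{A}_{LM}$ every center of blow-up is a torus-invariant subvariety of the toric ambient space $(\mathbb{P}^{n-d-2})^d$. Since smoothness and transversality of the strata are guaranteed by the proposition and the remark, iteratively blowing up along smooth torus-invariant subvarieties will preserve toricness, yielding the result.

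First, I would compute $\mathcal{G}_{\mathcal{A}_{LM}}$ explicitly. For a subset $I \subsetneq \{d+1,\dots,n\}$ with $|I|\geq 2$, the condition $\sum_{i \in I} a_i > 1$ splits into two cases. If $d+1 \notin I$, then $I \subseteq \{d+2,\dots,n\}$ has at most $n-d-1$ elements, and the weight sum is $|I|/(n-d-1) \leq 1$, so such $I$ never contribute. If $d+1 \in I$ and $I = \{d+1\} \cup J$ with $\emptyset \neq J \subseteq \{d+2,\dots,n\}$, then the weight sum is $1 + |J|/(n-d-1) > 1$. Therefore
\[
    \mathcal{G}_{\mathcal{A}_{LM}} = \left\{\, H_{\{d+1\}\cup J} \,\bigm|\, \emptyset \neq J \subseteq \{d+2,\dots,n\} \,\right\}.
\]

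Second, I would verify that each such $H_{\{d+1\}\cup J} = \bigcap_{j \in J} H_{j,d+1}$ is torus-invariant. Recall that $H_{j,d+1} = V(b_j^0,\dots,b_j^{d-1})$, and in the $k$th factor this cuts out the coordinate hyperplane $V(b_j^k) \subset \mathbb{P}^{n-d-2}$, which is torus-invariant. Consequently $H_{j,d+1}$ is a product of coordinate hyperplanes in $(\mathbb{P}^{n-d-2})^d$ and hence invariant under the standard torus action, and intersections of such remain torus-invariant.

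Finally, I would invoke the standard fact that blowing up a smooth toric variety along a smooth torus-invariant subvariety produces another toric variety. The proposition allows us to perform the blow-ups in any order of ascending dimension of the centers; proceeding inductively, each step blows up a smooth toric variety along the iterated strict transform of a smooth torus-invariant subvariety, which remains torus-invariant. Hence $\overline{P}_{d,n}^{LM}$ is toric. The main subtlety is simply keeping track of the iterated strict transforms and ensuring they stay torus-invariant at each stage, which is automatic once the initial centers are; there is no real obstacle beyond unpacking the definitions.
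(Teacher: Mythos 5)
Your proof is correct and follows essentially the same route as the paper: identify the blow-up centers for $\mathcal{A}_{LM}$ as the coordinate subvarieties $\bigcap_{j\in J} V(b_j^0,\dots,b_j^{d-1})$, observe that they are torus-invariant in $(\mathbb{P}^{n-d-2})^d$, and conclude from the standard fact that blowing up smooth torus-invariant centers preserves toricness (your explicit computation of $\mathcal{G}_{\mathcal{A}_{LM}}$ is merely left implicit in the paper). One cosmetic point: the indexing set should be $\emptyset\neq J\subsetneq\{d+2,\dots,n\}$, since $I=\{d+1\}\cup J$ is required to be a proper subset of $\{d+1,\dots,n\}$.
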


\begin{proof}
    By definition, the Losev--Manin compactification is the iterated blow-up of $(\mathbb{P}^{n-d-2})^d$ along the strict transforms of the subvarieties
    \[
        \bigcap_{i\in I\setminus d+1} H_{i,d+1}=
        \bigcap_{i\in I\setminus d+1} V(b_i^0,\dots,b_i^{d-1})
    \]
     Each one of these subvarieties is torus-invariant in $(\mathbb{P}^{n-d-2})^d$ under its standard toric structure. The result follows.
\end{proof}

The reason we call these compactifications Losev--Manin is that for $d=1$ the resulting space is precisely the Losev--Manin compactification of $M_{0,n}$.

\begin{proposition}
    The variety $\overline{P}_{1,n}^{LM}$ is isomorphic to the standard Losev--Manin compactification $\overline{M}_{0,n}^{LM}$.
\end{proposition}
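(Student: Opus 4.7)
The plan is to identify the iterated blowup presentation of $\overline{P}_{1,n}^{LM}$ afforded by the preceding proposition with the classical toric presentation of $\overline{M}_{0,n}^{LM}$ as the permutohedral variety (as recalled in the introduction).

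Setting $d=1$ in $\mathcal{A}_{LM}$ gives $a_1=a_2=1$ and $a_i=1/(n-2)$ for $i\geq 3$. A short arithmetic check shows that, for $I\subsetneq\{2,\ldots,n\}$ with $|I|\geq 2$, the inequality $\sum_{i\in I}a_i>1$ is equivalent to $2\in I$: if $2\in I$ then $\sum_{i\in I}a_i=1+(|I|-1)/(n-2)>1$, whereas if $I\subseteq\{3,\ldots,n\}$ is proper then $\sum_{i\in I}a_i=|I|/(n-2)\leq 1$. Hence $\mathcal{G}_{\mathcal{A}_{LM}}$ consists exactly of the $H_I$ with $2\in I$, and writing $J=I\setminus\{2\}$, each such $H_I=\bigcap_{j\in J}V(b_j^0)$ is the torus-invariant coordinate subspace of $\mathbb{P}^{n-3}$ indexed by $J$; as $I$ varies, $J$ ranges over all non-empty proper subsets of $\{3,\ldots,n\}$.

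By the preceding proposition, $\overline{P}_{1,n}^{LM}$ is therefore the iterated blowup of $\mathbb{P}^{n-3}$, in ascending dimension of centers, along every proper non-trivial torus-invariant coordinate subspace. At the level of fans, each such blowup is a star subdivision along the ray spanned by the sum of the basis vectors indexed by $J$; the blowups with $|J|=1$ are blowups along divisors and are hence trivial, while the remaining star subdivisions produce exactly the permutohedral fan on $\{3,\ldots,n\}$, with its full set of $2^{n-2}-2$ rays and nested-set cone structure. Since $\overline{M}_{0,n}^{LM}$ is the toric variety of this fan, the two spaces share the same fan and are therefore isomorphic.

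The step requiring most care is the final combinatorial identification: the sequence of iterated toric blowups must be matched with the iterated stellar subdivisions producing the permutohedral fan, which requires verifying that the strict transforms of the centers stay smooth and intersect transversely in the prescribed order. This is a standard but slightly intricate fact in toric geometry that is worth cross-referencing rather than reproving. An alternative route that bypasses this step is to observe that for $d=1$ the Gallardo--Routis moduli problem coincides with Hassett's $\overline{M}_{0,\mathcal{A}}$, with $\mathcal{A}_{LM}$ lying in the heavy/light chamber whose associated Hassett space is $\overline{M}_{0,n}^{LM}$.
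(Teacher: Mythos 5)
Your proposal is correct and follows essentially the same route as the paper: specialize the blow-up description to $d=1$, identify the centers $H_I$ (for $2\in I$) with the torus-invariant coordinate subspaces of $\mathbb{P}^{n-3}$, and recognize the iterated blow-up in increasing dimension of centers as the standard permutohedral construction of $\overline{M}_{0,n}^{LM}$. Your version is somewhat more explicit about the weight arithmetic, the triviality of the divisorial centers, and the fan-level stellar subdivisions, but the underlying argument is the same.
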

 \begin{proof}
     The variety $\overline{P}_{1,n}^{LM}$ is isomorphic to the iterated blow-up of $\mathbb{P}^{n-3}$ along the strict transforms of the subvarieties
        \[
            \bigcap_{i\in I\setminus 2} H_{i,d+1}=
            \bigcap_{i\in I\setminus 2} V(b_i^0),
        \]
    as $I$ ranges among all subsets $\{3,\dots,n\}$ of size larger than two, in any order of non-decreasing cardinality. Here the coordinates of $\mathbb{P}^{n-3}$ are $[b_{3}^0:\cdots:b_n^0]$. This process consists on blowing up the invariant points, then the strict transform of all invariant lines, etc. The resulting variety is precisely the Losev--Manin space.
\end{proof}

\section{Symmetric products of nestohedra}\label{sec: symmetric products}

\subsection*{Notation} Define $[n]:=\{1,\dots,n\}$. We denote the $n$-dimensional simplex by $\Delta_n$, and realize its inner normal fan in the vector space $\mathbb{R}^{n+1}/\mathbb{R}(1,\dots,1)$ as the fan with rays generated by the standard basis vectors $e_1,\dots,e_{n+1}$.

In this section we introduce the notion of symmetric product for nestohedra. This notion captures precisely the combinatorial structure of all toric compactifications $\overline{P}_{d,n}^{LM}$ and more. We first recall some basic facts about nestohedra and J. Almeter's notion of $P$-nestohedra \cite{almeter-thesis,almeter2020generalizing}.

\subsection{Nestohedra}

Recall that truncating a face $F$ of a polytope $P$ corresponds to performing a barycentric subdivision of the normal fan $\Sigma$ of $P$ along the cone $\sigma_F\in\Sigma$, which in turn corresponds to blowing up the associated torus-invariant subvariety in the toric variety $X(\Sigma)$.

Nestohedra are polytopes obtained from the simplex by truncating only certain faces, determined by a building set.

\begin{definition}\label{bg: definition building set}
    A collection $\cal{B}\subseteq2^{[n]}$ is said to be a (combinatorial) \emph{building set} if the following two conditions are met:
    \begin{itemize}
        \item $\{i\}\in\cal{B}$ for every singleton $i\in [n];$
        \item if $I,J\in\cal{B}$ satisfy $I\cap J\neq\emptyset$, then $I\cup J\in\cal{B}$.
    \end{itemize}
\end{definition}

The elements of the collection $\mathcal{B}_{max}\subseteq\cal{B}$ of inclusion-maximal elements of $\cal{B}$ are called the connected components of $\cal{B}$. A building set is said to be \emph{connected} if it has a single connected component, $\cal{B}_{max}=\{[n]\}$.

A building set $\cal{B}$ over $[n]$ gives rise to a $(n-1)$-dimensional polytope, called \emph{nestohedron}. In the connected case, these polytopes can be constructed via a series of truncations of the $(n-1)$-dimensional simplex as follows; see \cite[Theorem~4]{feichtner-yuzvinsky}.

\begin{construction}\label{bg: fan of connected building set}
    Start with the fan of the $(n-1)$-dimensional simplex, $\Sigma_{n-1}\subset\bb{R}^{n}/\bb{R}(1,\dots,1)$, whose rays have been labeled by the singletons $i$ for $i\in [n]$. Given a nonsingleton $I\in\cal{B}$, define the cone $\sigma_I:=\cone(e_i\,\vert\, i\in I)\in\Sigma_{n-1}$. Finally, define a total order of $\cal{B}=\{I_1,\dots,I_k\}$ such that $|I_i|\geq |I_j|$ if $i<j$. Then, by \cite[Theorem~4]{feichtner-muller}, the fan obtained after performing iterated barycentric subdivisions of the cones $\sigma_I$ as $I$ ranges through the elements of $\cal{B}$ in our choice of total order is called the \emph{nested fan of $\mathcal{B}$}, and we denote it as $\Sigma(\cal{B})$.
\end{construction}

The previous construction of $\Sigma(\cal{B})$ is independent of our choice of total order, provided that larger sets come before smaller ones. The nested fan is unimodular, as the barycentric subdivision of unimodular cones is unimodular; see also \cite[Corollary~5.2]{zelevinsky-nested}.

\begin{definition}
    The fan $\Sigma(\cal{B})$ is the inner normal fan of a polytope by construction. We call this polytope the \emph{nestohedron} corresponding to $\cal{B}$. We denote the toric variety corresponding to the nested fan as $X(\mathcal{B})$.
\end{definition}

\subsection{$P$-nestohedra}

Jordan Almeter introduced a very natural generalization of nestohedra in their doctoral dissertation \cite{almeter-thesis}, see also \cite{almeter2020generalizing} for a streamlined account. Their construction works as follows. Let $P$ be a simple, full-dimensional polytope in $\bb{R}^{n}$. Further suppose the facets of $P$ have been labeled by a set $S$.

\begin{definition}
    A \emph{$P$-building set} is a collection $\cal{B}\subseteq 2^S$ such that
    \begin{itemize}
        \item $\{i\}\in\cal{B}$ for every singleton $i\in S;$
        \item for each $I\in\cal{B}$, the face $F_I$ of $P$ is nonempty;
        \item for any two sets $I,J\in\cal{B}$ such that $I\cap J\neq\emptyset$, if $F_I\cap F_J=F_{I\cup J}$ is not empty, then $I\cup J\in\cal{B}$.
    \end{itemize}
\end{definition}

In analogy with the case of standard building sets, a $P$-building set defines a polytope which is constructed as an iterated truncation of $P$, called the \emph{$P$-nestohedron} of $\cal{B}$. In particular, if $P$ is a simplex, then the previous definition specializes to Definition~\ref{bg: definition building set}. The inner normal fan of the $P$-nestohedron of $\cal{B}$ is obtained as follows.

\begin{construction}
    Start with the inner normal fan of $P$, denoted $\Sigma_{P}\subset\bb{R}^{n}$, whose rays have been labeled by the singletons $i$ for $i\in S$. Given a nonsingleton element $I\in\cal{B}$, define the cone $\sigma_I:=\cone(e_i\,\vert\, i\in I)\in\Sigma_{P}$. Finally, define a total order of $\cal{B}=\{I_1,\dots,I_k\}$ such that $|I_i|\geq |I_j|$ if $i<j$. The resulting fan is called the \emph{$P$-nested fan of $\mathcal{B}$}, and we denote it as $\Sigma(\cal{B})$.
\end{construction}

By \cite[Theorem~3.4]{feichtner2004incidence}, the $P$-nested fan is unimodular, and independent of our choice of total order.

\begin{definition}
    Given a $P$-building set $\cal{B}$, the fan $\Sigma(\cal{B})$ is the inner normal fan of a polytope by construction. We call this polytope the \emph{$P$-nestohedron} corresponding to $\cal{B}$. We denote the toric variety corresponding to the nested fan as $X(\mathcal{B})$.
\end{definition}

\subsection{Symmetric products of nestohedra}

We now present the main combinatorial construction of this paper:

\begin{definition}[Symmetric products]\label{def:symmetric_product}
Fix positive integers $d$ and $n$. Let $\mathcal{B}$ be a connected building set on the $(n-1)$-dimensional simplex $\Delta$, and define $P = \Delta^d$ to be the $d$\textsuperscript{th} Cartesian product of $\Delta$. Label the facets of $\Delta$ with $[n]$, and those of $P$ with the disjoint union $[n]^{\sqcup d} := \{(i,j) \mid i \in [n], j \in [d]\}$. 

The \emph{$d$\textsuperscript{th} symmetric product of $\mathcal{B}$} is the $P$-nestohedron constructed from the $P$-building set consisting of all singletons of $[n]^{\sqcup d}$, together with all subsets of the form
\[
I^{\sqcup d} \subseteq [n]^{\sqcup d}, \quad \text{for all } I \in \mathcal{B}.
\]
We denote this $P$-building set as $\operatorname{sym}^d(\mathcal{B})$.
\end{definition}

\begin{remark}
    The term \emph{symmetric} reflects the fact that the construction treats all $d$ factors of $\Delta^d$ identically. This captures the essence of the GR compactifications, which themselves exhibit symmetry across the $d$ factors of $(\mathbb{P}^{n-d-2})^d$.
\end{remark}

\begin{example}\label{example sym prod}
    Let $n=1$ and $d=2$, and label the facets of $\Delta_1$ by $\{1,2\}$, so that its inner normal fan consists of two rays generated by vectors $e_1$ and $e_2$ such that $e_1+e_2=0$. Consider the building set $\mathcal{B}=2^{[2]}\setminus\emptyset$ on $\Delta_1$. Then, the corresponding symmetric product is the iterated barycentric subdivision of the fan $\Sigma_1\times\Sigma_1$ along the cones
    \[
        (\mathbb{R}e_1,\mathbb{R}e_1)\quad\text{and}\quad(\mathbb{R}e_2,\mathbb{R}e_2).
    \]
    Notice that the corresponding toric variety is precisely the Losev--Manin compactification $\overline{P}_{2,5}^{LM}$ obtained with the second weight vector in Example~\ref{ex: moduli}. See Figure~\ref{fig:example}.
\end{example}   

\begin{figure}[htbp]
    \centering
    \includegraphics[width=0.7\textwidth]{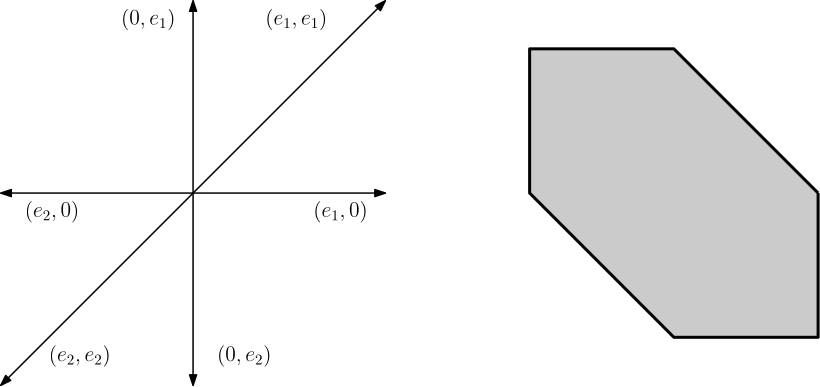}
    \caption{The fan $\operatorname{sym}^2(\mathcal{B})$ and corresponding polytope in Example~\ref{example sym prod}.}
    \label{fig:example}
\end{figure}

\begin{remark}
    Note that the product of the singleton sets from the building set $\mathcal{B}$ gives rise to nontrivial cones in $\operatorname{sym}^d(\mathcal{B})$, as shown in the previous example.
\end{remark}

Equipped with this definition, we are now ready to prove Theorem~\ref{intro: main thm}.

\begin{theorem}[{Theorem~\ref{intro: main thm}}]\label{main thm1}
    The Losev--Manin compactification $\overline{P}_{d,n}^{LM}$ is a toric variety whose fan is the $d$\textsuperscript{th} symmetric product of the complete building set on $\{d+2,\ldots,n\}$.
\end{theorem}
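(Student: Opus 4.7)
The plan is to make the identification completely explicit by comparing, on one side, the blow-up description of $\overline{P}_{d,n}^{LM}$ inherited from Gallardo--Routis and, on the other, the iterated barycentric subdivision description of the symmetric product.

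First I would unpack the weight vector $\mathcal{A}_{LM}$ through Definition~\ref{def: ga}. For a subset $I\subsetneq\{d+1,\dots,n\}$ with $|I|\geq 2$, the condition $\sum_{i\in I} a_i>1$ with $a_{d+1}=1$ and $a_i = 1/(n-d-1)$ otherwise forces $d+1\in I$; indeed, if $d+1\notin I$ then $I\subseteq\{d+2,\dots,n\}$ has $|I|\leq n-d-1$, so $\sum_{i\in I} a_i\leq 1$. Consequently
\[
    \mathcal{G}_{\mathcal{A}_{LM}} = \Bigl\{\, H_{J\cup\{d+1\}} = \bigcap_{i\in J} V(b_i^0,\dots,b_i^{d-1})\ \Bigm|\ \emptyset\neq J\subseteq\{d+2,\dots,n\}\,\Bigr\}.
\]

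Next I would match torus-invariant loci with cones. Label the facets of $\Delta_{n-d-2}$ with $\{d+2,\dots,n\}$, so that in the $j$-th factor of $(\mathbb{P}^{n-d-2})^d$ the divisor $V(b_i^{j-1})$ corresponds to the ray labelled $(i,j)\in\{d+2,\dots,n\}^{\sqcup d}$. The product fan of $(\mathbb{P}^{n-d-2})^d$ then has the rays $\{e_{(i,j)}\}$, and for any nonempty $J\subseteq\{d+2,\dots,n\}$ the subvariety $\bigcap_{i\in J} V(b_i^0,\dots,b_i^{d-1})$ is exactly the torus-invariant subvariety dual to the cone $\sigma_{J^{\sqcup d}}=\cone(e_{(i,j)}\mid i\in J,\, j\in[d])$.

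Now let $\mathcal{B}$ denote the complete building set on $\{d+2,\dots,n\}$, so that $\mathcal{B}$ consists of all nonempty subsets. Comparing with Definition~\ref{def:symmetric_product}, the $P$-building set $\operatorname{sym}^d(\mathcal{B})$ on $P=\Delta_{n-d-2}^d$ consists of the singletons $\{(i,j)\}$ together with the sets $J^{\sqcup d}$ for $\emptyset\neq J\subseteq\{d+2,\dots,n\}$. The nonsingleton cones subdivided in the $P$-nested fan construction are thus precisely the cones $\sigma_{J^{\sqcup d}}$, which are the cones associated to the blow-up centers in $\mathcal{G}_{\mathcal{A}_{LM}}$.

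Finally I would reconcile the two orderings. The Gallardo--Routis construction allows blow-ups in any order of ascending dimension of the centers, while the $P$-nested fan construction requires the total order on $\mathcal{B}$ to have $|I_i|\geq|I_j|$ when $i<j$. Since larger $|J|$ corresponds to lower-dimensional $H_{J\cup\{d+1\}}$, these are compatible. I would then invoke the standard fact that a barycentric star subdivision of a torus-invariant cone realizes the blow-up of the dual torus-invariant subvariety on the toric variety side, so the iterated blow-up producing $\overline{P}_{d,n}^{LM}$ is the toric variety of the iterated subdivision producing $\operatorname{sym}^d(\mathcal{B})$, completing the proof.

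The main step to handle carefully is the first one: showing that the weight condition singles out exactly the subsets of the form $J\cup\{d+1\}$, and correctly identifying the resulting collection of centers with the orbit structure of the product toric variety. Everything else is a direct translation between the two equivalent languages of building sets and torus-invariant blow-ups.
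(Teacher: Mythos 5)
Your proposal is correct and follows essentially the same route as the paper: identify the blow-up centers $H_{J\cup\{d+1\}}$ as torus-invariant subvarieties of $(\mathbb{P}^{n-d-2})^d$, match them with the cones $\sigma_{J^{\sqcup d}}$ of $\operatorname{sym}^d(\mathcal{B})$ via the orbit--cone correspondence (the paper isolates your asserted intersection statement as a separate lemma, $V(\sigma)\cap V(\tau)=V(\sigma\vee\tau)$), and translate iterated blow-ups into iterated star subdivisions with compatible orderings. The only nitpick is that $J$ should range over \emph{proper} nonempty subsets of $\{d+2,\dots,n\}$, though this is immaterial since $J=\{d+2,\dots,n\}$ would give an empty center on one side and an empty face on the other.
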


\begin{proof}
    Fix positive integers $n$ and $d$ such that $n>d+2$. Then, the Losev--Manin compactification $\overline{P}_{d,n}^{LM}$ is the iterated blow-up of $(\mathbb{P}^{n-d-2})^d$ along the strict transforms of the subvarieties
    \[
        H_{I\cup\{d+1\}}=\bigcap_{i\in I} V(b_i^0,\dots,b_i^{d-1}),
    \]
     for all $I\subsetneq\{d+2,\dots,n\}$ such that $|I|\geq 1$. Here we remind the reader that we are indexing the elements of this product space by $k=0,\dots,d-1$, and the homogeneous coordinates of the $k$\textsuperscript{th} copy of $\mathbb{P}^{n-d-2}$ are $[b_{d+2}^k:\ldots:b_{n}^k]$. It follows that each one of these subvarieties is torus-invariant in $(\mathbb{P}^{n-d-2})^d$ under its standard toric structure.

     On the other hand, consider $(n-d-2)$-dimensional simplex $\Delta$ with facets labeled by $S=\{d+2,\ldots,n\}$. The fan of the $d$\textsuperscript{th} symmetric product of $\mathcal{B}=2^{S}\setminus\emptyset$ consists of the cones
     \[
        \tau_I:=(\sigma_I,\sigma_I,\ldots,\sigma_I)\subseteq\left(\Sigma_{n-d-2}\right)^d
     \]
    for all $I\subsetneq S$, where $\sigma_I\in\Sigma_{n-d-2}$ is the cone defined by $I$.

     The key observation is that $H_I$ corresponds to $\tau_I$ under the toric orbit-cone correspondence. Indeed, for each $i\in S$ the subvariety $V(b_i^0,\dots,b_i^{d-1})$ corresponds to the cone obtained from the cone $(\rho_i,\dots,\rho_i)\in\left(\Sigma_{n-d-2}\right)^d$, where $\rho_i\in\Sigma_{n-d-2}$ is the ray corresponding to $i\in S$. On the other hand, by Lemma~\ref{lemma toric intersection} below, the intersection $H_I$ corresponds to the cone $(\eta_I,\dots,\eta_I)$, where
     \[
        \eta_I = \bigvee_{i\in I} \rho_i,
     \]
     is the smallest cone in $\Sigma_{n-d-2}$ containing all rays $\rho_i$ for $i\in I$. Since $\Sigma_{n-d-2}$ is the fan of the simplex, $\eta_I$ is precisely the cone $\sigma_I$ from above. Therefore, $H_I$ is the invariant subvariety defined by $\tau_I$, as claimed. This proves the result.
\end{proof}

\begin{lemma}\label{lemma toric intersection}
   Consider a toric variety $X=X_\Sigma$ with fan $\Sigma$, and consider any two cones $\sigma,\tau\in\Sigma$. Denote by $V(\sigma) = \overline{O(\sigma)}$ the invariant subvariety corresponding to $\sigma$, and the same for $\tau$. If it exists, let $\sigma\vee\tau$ be the minimal cone in $\Sigma$ containing both $\sigma$ and $\tau$. Then,
    \[
        V(\sigma)\cap V(\tau) = V(\sigma\vee\tau),
    \]
    provided that $\sigma\vee\tau$ exists, and the intersection is empty otherwise.
\end{lemma}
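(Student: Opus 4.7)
The plan is to reduce the statement to the orbit–cone correspondence for toric varieties. Recall that for a toric variety $X_\Sigma$, every torus-invariant closed subvariety corresponds to a cone $\gamma\in\Sigma$, and there is a decomposition
\[
V(\sigma) \;=\; \bigsqcup_{\gamma \in \Sigma,\, \sigma \preceq \gamma} O(\gamma),
\]
where $\preceq$ denotes the face relation and $O(\gamma)$ is the associated torus orbit. I would start the proof by citing this decomposition (e.g.\ from Cox--Little--Schenck) and applying it to both $V(\sigma)$ and $V(\tau)$.

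Next, intersecting the two decompositions and using that the orbits $O(\gamma)$ partition $X_\Sigma$, I would obtain
\[
V(\sigma)\cap V(\tau) \;=\; \bigsqcup_{\gamma \in \Sigma,\; \sigma \preceq \gamma,\; \tau \preceq \gamma} O(\gamma).
\]
The task then becomes comparing the index set $\{\gamma\in\Sigma \mid \sigma\preceq\gamma\text{ and }\tau\preceq\gamma\}$ with the index set $\{\gamma\in\Sigma \mid \sigma\vee\tau\preceq\gamma\}$ appearing in the decomposition of $V(\sigma\vee\tau)$.

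The key step, and the main obstacle, is showing that these two sets of cones coincide whenever $\sigma\vee\tau$ exists. One direction is immediate: any cone containing $\sigma\vee\tau$ as a face automatically contains $\sigma$ and $\tau$ as faces, since $\sigma,\tau\preceq \sigma\vee\tau$ by definition of the join. For the reverse direction, suppose $\gamma\in\Sigma$ has $\sigma\preceq\gamma$ and $\tau\preceq\gamma$. Then $\gamma$ contains both $\sigma$ and $\tau$, hence contains their convex hull $\sigma+\tau$. I would argue, using that $\Sigma$ is a fan, that $\sigma+\tau$ is in fact the support of a face of $\gamma$: the minimal face of $\gamma$ containing $\sigma+\tau$ is obtained as the intersection of $\gamma$ with any supporting hyperplane cutting out $\sigma+\tau$, and this face lies in $\Sigma$. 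By minimality of $\sigma\vee\tau$ in $\Sigma$, this face must be $\sigma\vee\tau$, so $\sigma\vee\tau\preceq\gamma$.

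Finally, for the empty case, if the two index sets of cones containing $\sigma$ and $\tau$ as faces is empty, the intersection $V(\sigma)\cap V(\tau)$ is empty by the displayed disjoint union, and this is exactly the assertion that no $\sigma\vee\tau$ exists in $\Sigma$. Combining both cases gives the statement. The argument is essentially a standard unpacking of the orbit–cone correspondence, and the only place where genuine care is needed is the fan-theoretic assertion that the minimal cone of $\gamma$ containing $\sigma+\tau$ is itself a cone of $\Sigma$, which follows from the fact that every face of a cone in $\Sigma$ lies in $\Sigma$.
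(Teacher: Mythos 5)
Your proof is correct and takes essentially the same route as the paper's: both decompose $V(\sigma)$ and $V(\tau)$ into torus orbits via the orbit--cone correspondence and intersect the resulting unions. The only difference is that you spell out why the cones of $\Sigma$ having both $\sigma$ and $\tau$ as faces are exactly those having $\sigma\vee\tau$ as a face (using that a cone of $\Sigma$ contained in another is a face of it), a step the paper asserts without comment.
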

\begin{proof}
    By \cite[Theorem~3.2.6]{CLS}, $V(\sigma) = \bigcup_{\sigma\preceq\eta}O(\eta)$. Then,
    \[
        V(\sigma)\cap V(\tau) = \bigcup_{\substack{\sigma\preceq\eta\\ \tau\preceq\eta}}O(\eta).
    \]
    This equals $\bigcup_{\sigma\vee\tau \preceq\eta}O(\eta) = V(\sigma\vee\tau)$ if $\sigma\vee\tau$ exists, and is empty otherwise. The result follows.
\end{proof}

Upon further inspection of the proof of Theorem~\ref{intro: main thm} above, it becomes clear that the same argument hinges on the symmetry of the subvarieties $H_I$ in $\left(\mathbb{P}^{n-d-2}\right)^d$, rather than on the specific fan of the Losev--Manin compactification. In particular, an identical argument to the one used above yields Theorem~\ref{intro: main thm2}.

\begin{theorem}[{Theorem~\ref{intro: main thm2}}]\label{main thm2}
    Let $\mathcal{A}=(a_1,\dots,a_n)\in\mathcal{D}_{d,n}^P$ be a weight vector such that $a_{d+2}+\cdots+a_n\leq 1$, so that $\mathcal{G}_\mathcal{A}\subseteq\mathcal{G}_{\mathcal{A}_{LM}}$. Define the building set $\mathcal{B}_\mathcal{A}$ on $S=\{d+2,\dots,n\}$ so that its non-singleton elements are
    \[
        I\subsetneq S\;\text{such that}\; a_{d+1}+\sum_{i\in I}a_i> 1.
    \]
    Then, $\overline{P}_{d,n}^\mathcal{A}$ is a toric variety, and its corresponding fan is that one of the $d$\textsuperscript{th} symmetric product of $\mathcal{B}_\mathcal{A}$.
\end{theorem}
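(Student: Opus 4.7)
The plan is to adapt the argument of Theorem~\ref{main thm1}, using that the hypothesis $a_{d+2}+\cdots+a_n\leq 1$ restricts $\mathcal{G}_\mathcal{A}$ to subvarieties of the same shape as those blown up in the Losev--Manin case.

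First, I would invoke \cite[Corollary~4.9]{gallardo2017wonderful} to describe $\overline{P}_{d,n}^\mathcal{A}$ as an iterated blow-up of $(\mathbb{P}^{n-d-2})^d$ along the strict transforms of the elements of $\mathcal{G}_\mathcal{A}$. The key observation is that under the hypothesis, any $I\subsetneq\{d+1,\ldots,n\}$ with $\sum_{i\in I}a_i>1$ must contain $d+1$: otherwise, $\sum_{i\in I}a_i\leq a_{d+2}+\cdots+a_n\leq 1$, a contradiction. Writing $I=J\cup\{d+1\}$ for $J\subsetneq S=\{d+2,\ldots,n\}$, the weight condition becomes $a_{d+1}+\sum_{i\in J}a_i>1$, matching the definition of $\mathcal{B}_\mathcal{A}$. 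Each such $H_{J\cup\{d+1\}}=\bigcap_{i\in J}V(b_i^0,\ldots,b_i^{d-1})$ is torus-invariant in $(\mathbb{P}^{n-d-2})^d$, so the iterated blow-up is a toric variety.

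Next, I would verify that $\mathcal{B}_\mathcal{A}$ is a building set on $S$. Singletons are included by definition. For closure under overlapping union, suppose $J_1,J_2\in\mathcal{B}_\mathcal{A}$ are non-singleton with $J_1\cap J_2\neq\emptyset$. Since weights are nonnegative and $J_1\subseteq J_1\cup J_2$,
\[
a_{d+1}+\sum_{i\in J_1\cup J_2}a_i \;\geq\; a_{d+1}+\sum_{i\in J_1}a_i \;>\; 1,
\]
so $J_1\cup J_2$ satisfies the weight condition and lies in $\mathcal{B}_\mathcal{A}$.

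The final step is to match the blow-up fan with $\operatorname{sym}^d(\mathcal{B}_\mathcal{A})$. Applying Lemma~\ref{lemma toric intersection} exactly as in the proof of Theorem~\ref{main thm1}, the invariant subvariety $H_{J\cup\{d+1\}}$ corresponds to the cone $\tau_J=(\sigma_J,\ldots,\sigma_J)\in(\Sigma_{n-d-2})^d$. Iteratively blowing up these subvarieties in ascending dimension corresponds to iteratively performing barycentric subdivision along the cones $\tau_J$ in decreasing dimension, which by Definition~\ref{def:symmetric_product} produces precisely the fan of $\operatorname{sym}^d(\mathcal{B}_\mathcal{A})$. The main obstacle I anticipate is reconciling the order in which the blow-ups are performed with the ordering needed for the barycentric subdivisions: Gallardo--Routis allows blow-ups in any ascending dimensional order, while the symmetric product construction requires subdivisions in decreasing dimension, so I would rely on the order-independence built into both constructions (Construction~\ref{bg: fan of connected building set}) to align them.
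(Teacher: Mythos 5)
Your proof is correct and takes essentially the same route as the paper, whose own proof simply observes that $I\in\mathcal{B}_\mathcal{A}$ if and only if $H_{I\cup\{d+1\}}\in\mathcal{G}_\mathcal{A}$ and then declares the rest identical to the proof of Theorem~\ref{main thm1}. Your extra steps---showing every $I$ with $\sum_{i\in I}a_i>1$ must contain $d+1$, and checking closure of $\mathcal{B}_\mathcal{A}$ under overlapping unions---only make explicit what the paper leaves implicit, and your ordering worry dissolves because ascending dimension of the subvarieties $H_I$ coincides with descending cardinality of the sets $I$, so the two prescribed orders agree.
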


\begin{proof}
    Observe that the building set $\mathcal{B}_\mathcal{A}$ is defined so that $I\in\mathcal{B}_\mathcal{A}$ if and only if $H_I\in\mathcal{G}_\mathcal{A}$. The rest of the argument is identical to that in the proof of Theorem~\ref{main thm1}.
\end{proof}

\begin{remark}
    The condition $a_{d+2} + \cdots + a_n \leq 1$ ensures that $\mathcal{G}_\mathcal{A} \subseteq \mathcal{G}_{\mathcal{A}_{LM}}$, which guarantees the resulting variety $\overline{P}_{d,n}^\mathcal{A}$ is toric and it admits a reduction map from $\overline{P}_{d,n}^\mathcal{LM}$.
\end{remark}

\subsection{Proof of Theorem~\ref{main thm3}}

We now establish the relationship between the GR compactifications $\overline{P}_{d,n}^\mathcal{A}$ and Hassett's compactifications of $M_{0,n}$, proving Theorem~\ref{main thm3}.

\begin{theorem}[{Theorem~\ref{intro: main thm3}}]\label{main thm3}
    Let $\mathcal{A}=(a_1,\dots,a_n)$ be a weight vector such that $a_{d+2}+\cdots+a_n\leq 1$, and let $\mathcal{B}_\mathcal{A}$ be the building set from Theorem~\ref{main thm2}. Then $\mathcal{A}' = (1,a_{d+1},\dots,a_n)$ defines a Hassett compactification $\overline{M}_{0,\mathcal{A}'}$ satisfying:
    \begin{enumerate}
        \item $\overline{M}_{0,\mathcal{A}'}$ admits a reduction map to the compactification of $M_{0,n-d+1}$ given by $\mathbb{P}^{n-d-2}$, and a reduction map from $\overline{M}_{0,n-d+1}^{LM}$.
        \item $\overline{M}_{0,\mathcal{A}'}$ is the toric variety constructed from the nested fan of $\mathcal{B}_\mathcal{A}$.
    \end{enumerate}
\end{theorem}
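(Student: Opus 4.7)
The plan is to verify that $\mathcal{A}'$ is a valid Hassett weight vector, use Hassett's reduction theorem to obtain the morphisms in (1), and then invoke the Bown--González Anaya classification combined with Hassett's weighted-stability criterion to prove (2).

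Each entry of $\mathcal{A}'=(1,a_{d+1},a_{d+2},\ldots,a_n)$ lies in $(0,1]$ by the definition of a weight vector. The built-in lower bounds $a_{d+1}\ge w_{d+1}$ and $a_i\ge\varepsilon$ for $i\ge d+2$ give
\[
\sum_i a_i'\;\ge\;1+w_{d+1}+(n-d-1)\varepsilon\;=\;2+d\varepsilon'\;>\;2,
\]
after substituting $w_{d+1}=1-(n-d-1)\varepsilon+d\varepsilon'$, so $\overline{M}_{0,\mathcal{A}'}$ is a legitimate Hassett compactification. Realizing $\overline{M}_{0,n-d+1}^{LM}$ as the Hassett space for weights $\mathcal{A}_{LM}=(1,1,\epsilon,\ldots,\epsilon)$ with $\epsilon$ sufficiently small, the hypothesis that $\overline{P}_{d,n}^\mathcal{A}$ admits a reduction from $\overline{P}_{d,n}^{LM}$ translates (after compressing the first $d$ coordinate points into the single heavy weight ``$1$'' of $\mathcal{A}'$) into $\mathcal{A}'\le\mathcal{A}_{LM}$ componentwise. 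Hassett's reduction theorem then produces $\overline{M}_{0,n-d+1}^{LM}\to\overline{M}_{0,\mathcal{A}'}$.

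For (2), I invoke the Bown--González Anaya classification \cite{bown2024hypergraphassociahedracompactificationsmoduli}: the Hassett compactifications $\overline{M}_{0,\mathcal{B}'}$ admitting a reduction from $\overline{M}_{0,N}^{LM}$ are exactly the toric varieties $X(\mathcal{B})$ associated, via the nested-fan construction, to sub-building-sets $\mathcal{B}$ of the complete building set on $\{3,\ldots,N\}$. Taking $N=n-d+1$ yields $\overline{M}_{0,\mathcal{A}'}\cong X(\mathcal{B})$ for a unique $\mathcal{B}\subseteq 2^{\{d+2,\ldots,n\}}$. The minimal such toric variety (where $\mathcal{B}$ consists only of singletons) is the simplex fan, i.e., $\mathbb{P}^{n-d-2}$, and the natural toric coarsening $X(\mathcal{B})\to\mathbb{P}^{n-d-2}$ supplies the other reduction map required by (1). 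To match $\mathcal{B}$ with $\mathcal{B}_\mathcal{A}$: a non-singleton $I$ belongs to $\mathcal{B}$ precisely when the exceptional divisor corresponding to the collision of $\{p_i:i\in I\}$ at the heavy point $p_{d+1}$ survives in $\overline{M}_{0,\mathcal{A}'}$, and by Hassett's weighted-stability criterion this survives exactly when $a_{d+1}+\sum_{i\in I}a_i>1$---which is the defining condition for the non-singleton elements of $\mathcal{B}_\mathcal{A}$.

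The principal obstacle is this final identification step: carefully translating Hassett's boundary-divisor stability into the specific numerical inequality defining $\mathcal{B}_\mathcal{A}$, and ensuring that the Bown--González Anaya correspondence---which packages modular coarsenings of the permutohedral fan as nested fans---aligns with the iterated-blowup presentation of $\overline{M}_{0,\mathcal{A}'}$ over $\mathbb{P}^{n-d-2}$ used in the construction of $\mathcal{B}_\mathcal{A}$. Once the bijection between surviving exceptional divisors and non-singleton elements of the building set is established, the identification $\overline{M}_{0,\mathcal{A}'}\cong X(\mathcal{B}_\mathcal{A})$ follows directly from Construction~\ref{bg: fan of connected building set}.
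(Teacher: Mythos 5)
Your overall strategy is the same as the paper's: check that the entries of $\mathcal{A}'$ sum to more than $2$ using the lower bounds $a_i\ge w_i$, obtain the reduction maps of part (1) from componentwise weight comparisons via Hassett's reduction theorem, and deduce part (2) from the Bown--Gonz\'alez Anaya classification together with the defining inequality of $\mathcal{B}_\mathcal{A}$. The validity check and part (2) are essentially correct and match the paper.

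However, there is a genuine gap in your derivation of the reduction map from $\overline{M}_{0,n-d+1}^{LM}$. You realize the Losev--Manin space by the weights $(1,1,\epsilon,\ldots,\epsilon)$ with $\epsilon$ ``sufficiently small'' and then assert $\mathcal{A}'\le(1,1,\epsilon,\ldots,\epsilon)$ componentwise. That inequality is false: the light entries of $\mathcal{A}'$ are bounded below by $w_i=1/(n-d)$, a fixed positive number, so they exceed any sufficiently small $\epsilon$. The correct move --- and the only place in part (1) where the hypothesis $a_{d+2}+\cdots+a_n\le 1$ is actually needed --- is to pick a \emph{different} representative of the Losev--Manin chamber that dominates $\mathcal{A}'$, namely $(1,1,a_{d+2},\ldots,a_n)$: it lies in the Losev--Manin chamber precisely because the light weights sum to at most $1$ (all light points may collide with one another, none with a heavy point), and it is $\ge\mathcal{A}'$ because $a_{d+1}\le 1$. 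The fact that your argument for part (1) never invokes the hypothesis $a_{d+2}+\cdots+a_n\le 1$ is the symptom of this gap. Two smaller points: the reduction map to $\mathbb{P}^{n-d-2}$ is most directly obtained, as in the paper, from a weight comparison $\mathcal{A}_{\mathbb{P}}\le\mathcal{A}'$ with $\mathcal{A}_{\mathbb{P}}$ a heavy/light vector realizing $\mathbb{P}^{n-d-2}$ (your toric coarsening $X(\mathcal{B})\to\mathbb{P}^{n-d-2}$ is a fan map, but ``reduction map'' in the statement is the modular one, so this requires an extra identification); and the degenerate case $d=1$, $n=3$, where $\overline{P}_{1,3}^{\mathcal{A}}=\mathbb{P}^1$, should be handled separately.
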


\begin{proof}
First we prove that the weight vector $\mathcal{A}'=(1,a_{d+1},a_{d+2},\dots,a_n)$ is indeed a valid weight vector in $\mathcal{D}_{0,n}$, so it defines a Hassett space. Note that
\begin{align*}
    1+\sum_{i=d+1}^n a_i \geq 1 + \sum_{i=d+1}^n w_i    & = 1 + (1 - (n-(d+1))\varepsilon + d\varepsilon') + \sum_{i=d+2}^n\varepsilon \\
                                                        & = 2 +d\varepsilon' > 2,
\end{align*}
so that $\mathcal{A}'$ is indeed an element of $\mathcal{D}_{0,n}$. 

Let us next verify the existence of the reduction maps. Note that if $d=1$ and $n=3$ we have that $\overline{P}_{1,3}^\mathcal{A}=\mathbb{P}^1$ for all weight vectors (even if the inequality is not satisfied), so the result is immediate. In the remaining cases, $d>1$ or $d=1$ and $n\geq 4$, it is possible to verify that
\[
    \mathcal{A}_\mathbb{P} = (1,1/(n-2),\dots,1/(n-2)) \leq (1,w_{d+1},\dots,w_n) \leq \mathcal{A}'.
\]
On the other hand, the inequality guarantees the weight vector $(1,1,a_{d+3},\dots,a_n)$ lies in the same coarse chamber as $\mathcal{A}_{LM}= (1,1,1/(n-2),\dots,1/(n-2))$, and this weight vector is greater than or equal to $\mathcal{A}'$. The result follows.

The second part of the theorem is a direct consequence of \cite[Theorem~1.1]{bown2024hypergraphassociahedracompactificationsmoduli} and the definition of $\mathcal{B}_\mathcal{A}$.
\end{proof}

\bibliographystyle{plain}
\bibliography{./bibliography}
\end{document}